\def\F{{\mathcal F}}
\def\frk{\frak}               
\def\Phi{{\frk n}}
\def\Phi{{\frk N}}
\def\opn#1#2{\def#1{\operatorname{#2}}} 
\opn\chara{char} \opn\length{\ell} \opn\pd{pd} \opn\rk{rk}
\opn\projdim{proj\,dim} \opn\injdim{inj\,dim} \opn\rank{rank}
\opn\depth{depth} \opn\grade{grade} \opn\height{height}
\opn\embdim{emb\,dim} \opn\codim{codim}
\opn\Tr{Tr} \opn\bigrank{big\,rank}
\opn\superheight{superheight}\opn\lcm{lcm}
\opn\trdeg{tr\,deg}
\opn\reg{reg} \opn\lreg{lreg} \opn\ini{in} \opn\lpd{lpd}
\opn\size{size}
\opn\div{div} \opn\Div{Div} \opn\cl{cl} \opn\Cl{Cl}
\opn\Spec{Spec} \opn\Supp{Supp} \opn\supp{supp} \opn\Sing{Sing}
\opn\Ass{Ass} \opn\Min{Min}
\opn\Ann{Ann} \opn\Rad{Rad} \opn\Soc{Soc}
\opn\Im{Im} \opn\Ker{Ker} \opn\Coker{Coker} \opn\Am{Am}
\opn\Hom{Hom} \opn\Tor{Tor} \opn\Ext{Ext} \opn\End{End}
\opn\Aut{Aut} \opn\id{id}
\opn\nat{nat}
\opn\pff{pf}
\opn\Pf{Pf} \opn\GL{GL} \opn\SL{SL} \opn\mod{mod} \opn\ord{ord}
\opn\Gin{Gin} \opn\Hilb{Hilb}
\opn\aff{aff} \opn\con{conv} \opn\relint{relint} \opn\st{st}
\opn\lk{lk} \opn\cn{cn} \opn\core{core} \opn\vol{vol}
\opn\link{link} \opn\star{star}
\opn\gr{gr}
\def\pot#1#2{#1[\kern-0.28ex[#2]\kern-0.28ex]}
\opn\dirlim{\underrightarrow{\lim}}
\opn\inivlim{\underleftarrow{\lim}}
\def\Implies{\ifmmode\Longrightarrow \else
        \unskip${}\Longrightarrow{}$\ignorespaces\fi}
\def\implies{\ifmmode\Rightarrow \else
        \unskip${}\Rightarrow{}$\ignorespaces\fi}
\def\iff{\ifmmode\Longleftrightarrow \else
        \unskip${}\Longleftrightarrow{}$\ignorespaces\fi}
\newtheorem{Theorem}{Theorem}[section]
\newtheorem{Lemma}[Theorem]{Lemma}
\newtheorem{Remark}[Theorem]{Remark}
\newtheorem{Example}[Theorem]{Example}
\newtheorem{Definition}[Theorem]{Definition}
\opn\Syz{Syz} \opn\Im{Im} \opn\Ker{Ker} \opn\Coker{Coker}
\opn\Am{Am} \opn\Hom{Hom} \opn\Tor{Tor} \opn\Ext{Ext} \opn\End{End}
\opn\Aut{Aut} \opn\id{id}
\opn\nat{nat}
\opn\pff{pf}
\opn\Pf{Pf} \opn\GL{GL} \opn\SL{SL} \opn\mod{mod} \opn\ord{ord}
\opn\Gin{Gin}\opn\min{min}
\opn\Hilb{Hilb}\opn\adeg{adeg}\opn\std{std}\opn\ip{infpt}
\opn\Pol{Pol}\opn\sdepth{sdepth}\opn\infpt{infpt}
\opn\depth{depth}\opn\sqdepth{sqdepth}\opn{\Mon}{Mon}
\let\epsilon\varepsilon
\let\phi=\varphi
\let\kappa=\varkappa
\def\qed{\ifhmode\textqed\fi
      \ifmmode\ifinner\quad\qedsymbol\else\dispqed\fi\fi}
\def\textqed{\unskip\nobreak\penalty50
       \hskip2em\hbox{}\nobreak\hfil\qedsymbol
       \parfillskip=0pt \finalhyphendemerits=0}
\def\dispqed{\rlap{\qquad\qedsymbol}}
\opn\dis{dis}
\def\pnt{{\raise0.5mm\hbox{\large\bf.}}}
\opn\Lex{Lex}
\begin{document}
\title
{Topological and Algebraic Characterizations of Gallai-Simplicial
Complexes}

\author{Imran Ahmed, Shahid Muhmood}

\address{COMSATS Institute of Information Technology, Lahore, Pakistan}
\email{drimranahmed@ciitlahore.edu.pk}
\address{COMSATS Institute of Information Technology, Lahore, Pakistan}
\email{shahid\_nankana@yahoo.com}

 \maketitle
\begin{abstract} We recall first Gallai-simplicial
complex $\Delta_{\Gamma}(G)$ associated to Gallai graph $\Gamma(G)$
of a planar graph $G$, see \cite{AKNS}. The Euler characteristic is
a very useful topological and homotopic invariant to classify
surfaces. In Theorems \ref{t1} and \ref{t3}, we compute Euler
characteristics of Gallai-simplicial complexes associated to
triangular ladder and prism graphs, respectively. 

Let $G$ be a finite simple graph on $n$ vertices of the form $n=3l+2$ or $3l+3$. In Theorem
\ref{t5}, we prove that $G$ will be $f$-Gallai graph for the following types of constructions of $G$.\\
{\bf Type 1.}  When $n=3l+2$. $G=\mathbb{S}_{4l}$ is a graph
consisting of two copies of star graphs $S_{2l}$ and $S'_{2l}$ with
$l\geq 2$ having $l$ common vertices.\\ {\bf Type 2.} When $n=3l+3$.
$G=\mathbb{S}_{4l+1}$ is a graph consisting of two star graphs
$S_{2l}$ and $S_{2l+1}$ with $l\geq 2$ having $l$ common vertices.
 \vskip 0.4 true cm
 \noindent
 \noindent
 \noindent
{\it Key words}: Euler characteristic, simplicial complex and $f$-ideals.\\
{\it 2010 Mathematics Subject Classification: Primary 05E25, 55U10, 13P10 Secondary 06A11, 13H10.}\\
\end{abstract}

\pagestyle{myheadings} \markboth{\centerline {\scriptsize Ahmed and
Muhmood}}
         {\centerline {\scriptsize Topological and Algebraic Characterizations of Gallai-Simplicial
Complexes}}

\maketitle

\section{Introduction}



Let $X$ be a finite CW complex of dimension $N$. The Euler
characteristic is a function $\chi$ which associates to each $X$ an
integer $\chi(X)$. More explicitly, the Euler characteristic of $X$
is defined as the alternating sum
$$\chi(X)=\sum\limits_{k=0}^{N}(-1)^k\beta_k(X)$$
with $\beta_k(X)=rank(H_k(X))$ the $k$-th Betti number of $X$.

The Euler characteristic is a very useful topological and homotopic
invariant to classify surfaces. The Euler characteristic is uniquely
determined by excision $\chi(X)=\chi(C)+\chi(X\backslash C)$, for
every closed subset $C\subset X$. The excision property has a dual
form $\chi(X)=\chi(U)+\chi(X\backslash U)$, for every open subset
$U\subset X$, see \cite{H} and \cite{M} for more details.


We consider a planar graph $G$, the Gallai graph $\Gamma(G)$ of $G$
is a graph having edges of $G$ as its vertices, that is,
$V(\Gamma(G))=E(G)$ and two distinct edges of $G$ are adjacent in
$\Gamma(G)$ if they are adjacent in $G$ but dot span a triangle. The
buildup of the 2-dimensional Gallai-simplicial complex
$\Delta_{\Gamma}(G)$ from a planar graph $G$ is an abstract idea
similar to building an origami shape from a plane sheet of paper by
defining a crease pattern, see \cite{AKNS}.

Let $S=k[x_1,\ldots,x_n]$ be a polynomial ring over an infinite
field $k$. There is a natural bijection between a square free
monomial ideal and a simplicial complex written as
$\Delta\leftrightarrow I_{\mathcal{N}}(\Delta)$, where
$I_{\mathcal{N}}(\Delta)$ is the Stanley-Reisner ideal or non-face
ideal of $\Delta$, see for instance \cite{WBH}. In \cite{SF}, Faridi
introduced another correspondence $\Delta\leftrightarrow
I_{\mathcal{F}}(\Delta)$, where $I_{\mathcal{F}}(\Delta)$ is the
facet ideal of $\Delta$. She discussed its connections with the
theory of Stanley-Reisner rings.

In \cite{AAAB} and \cite{AMBZ}, the authors investigated the
correspondence $\delta_{\mathcal{F}}(I)\leftrightarrow
I\leftrightarrow \delta_{\mathcal{N}}(I)$, where
$\delta_{\mathcal{F}}(I)$ and $\delta_{\mathcal{N}}(I)$ are facet
and non-face simplicial complexes associated to the square free
monomial ideal $I$ (respectively). A square free monomial ideal $I$
in $S$ is said to be an $f$-ideal if and only if both
$\delta_{\mathcal{F}}(I)$ and $\delta_{\mathcal{N}}(I)$ have the
same $f$-vector. The concepts of $f$- ideals is important in the
sense that it discovers new connections between both the theories.
The complete characterization of $f$-ideals in the polynomial ring
$S$ over a field $k$  can be found in \cite{AMBZ}. A simple finite
graph $G$ is said to be the $f$-graph if its edge ideal $I(G)$ is an
$f$-ideal of degree 2, see \cite{MAZ}.

In Theorems \ref{t1} and \ref{t3}, we compute Euler characteristics
of Gallai-simplicial complexes associated to triangular ladder and
prism graphs, respectively.

Let $G$ be a finite simple graph on $n$ vertices of the form $n=3l+2$ or $3l+3$. In Theorem
\ref{t5}, we prove that $G$ will be $f$-Gallai graph for the following types of constructions of $G$.\\
{\bf Type 1.}  When $n=3l+2$. $G=\mathbb{S}_{4l}$ is a graph
consisting of two copies of star graphs $S_{2l}$ and $S'_{2l}$ with
$l\geq 2$ having $l$ common vertices.\\ {\bf Type 2.} When $n=3l+3$.
$G=\mathbb{S}_{4l+1}$ is a graph consisting of two star graphs
$S_{2l}$ and $S_{2l+1}$ with $l\geq 2$ having $l$ common vertices.

\section{Preliminaries}

\noindent A simplicial complex $\Delta$ on $[{n}]=\{1,\ldots, n\}$
is a collection of subsets of $[n]$ with the property that $\{i\}\in
\Delta$ for all $i$, and if $F\in \Delta$ then every subset of $F$
will belong to $\Delta$ (including empty set). The elements of
$\Delta$ are called faces of $\Delta$ and the dimension of a face
$F\in\Delta$ is defined as $|F|-1$, where $|F|$ is the number of
vertices of $F$. The faces of dimensions $0$ and $1$ are called
vertices and edges, respectively, and $dim\ \emptyset= -1$.

\noindent The maximal faces of $\Delta$ under inclusion are called
facets. The dimension of $\Delta$ is denoted by $\dim\Delta$ and is
defined as:
$$\dim\Delta=max\{\dim F\ |\ F\in\Delta\}.$$ A simplicial complex is
said to be pure if it has all the facets of the same dimension. If
$\{F_1,\ldots ,F_q\}$ is the set of all the facets of $\Delta$, then
$\Delta=<F_1,\ldots ,F_q>$.

We denote by $\Delta_n$ the closed $n$-dimensional simplex. Every
simplex $\Delta_n$ is homotopic to a point and thus
$$\chi(\Delta_n)=1,\,\forall\, n\geq 0.$$
Note that $\partial \Delta_n$ is homeomorphic to the $(n-1)$-sphere
$S^{n-1}$. Since $S^0$ is a union of two points, we have
$\chi(S^0)=2$. In general, the $n$-dimensional sphere is a union of
two closed hemispheres intersecting along the Equator which is a
$(n-1)$ sphere. Therefore,
$$\chi(S^n)=2\chi(\Delta_n)-\chi(S^{n-1})=2-\chi(S^{n-1}).$$
We deduce inductively
$$2=\chi(S^n)+\chi(S^{n-1})=\ldots=\chi(S^1)+\chi(S^0)$$
so that $\chi(S^n)=1+(-1)^n$. Now, note that the interior of
$\Delta_n$ is homeomorphic to $\mathbb{R}^n$ so that
$$\chi(\mathbb{R}^n)=\chi(\Delta_n)-\chi(\partial\Delta_n)=1-\chi(S^{n-1})=(-1)^n.$$

The excision property implies the following useful formula. Suppose
$$\emptyset\subset\Delta^{(0)}\subset\ldots\subset\Delta^{(N)}=\Delta$$
is an increasing filtration of $\Delta$ by closed subsets. Then,
$$\chi(\Delta)=\chi(\Delta^{(0)})+\chi(\Delta^{(1)}\backslash\Delta^{(0)})
+\ldots+\chi(\Delta^{(N)}\backslash\Delta^{(N-1)}).$$ We denote by
$\Delta^{(k)}$ the union of the simplices of dimension $\leq k$.
Then, $\Delta^{(k)}\backslash\Delta^{(k-1)}$ is the union of
interiors of the $k$-dimensional simplices. We denote by
$f_k(\Delta)$ the number of such simplices. Each of them is
homeomorphic to $\mathbb{R}^k$ and thus its Euler characteristic is
equal to $(-1)^k$. Consequently, the Euler characteristic of
$\Delta$ is given by
$$\chi(\Delta)=\sum\limits_{k=0}^N(-1)^k{f_k}(\Delta),$$
see \cite{H} and \cite{M}.

Let $\Delta$ be a simplicial complex of dimension $N$, we define its
$f$-vector by a $(N+1)$-tuple $f=(f_0,\ldots, f_N)$, where $f_i$ is
the number of $i$-dimensional faces of $\Delta$.

The following definitions serve as a bridge between the
combinatorial and algebraic properties of the simplicial complexes
over the finite set of vertices $[n]$.

Let $\Delta$ be a simplicial complex over the vertex set
$\{v_1,\ldots,v_n\}$ and $S=k[x_1,\ldots,x_n]$ be the polynomial
ring on $n$ variables.  We define the facet ideal of $\Delta$ by
$I_\mathcal{F}(\Delta)$, which is an ideal of $S$ generated by
square free monomials $x_{i_1}\ldots x_{i_s}$ where
$\{v_{i_1},\ldots,v_{i_s}\}$ is a facet of $\Delta$. We define the
non-face ideal or the Stanley-Reisner ideal of $\Delta$ by
$I_\mathcal{N}(\Delta)$, which is an ideal of $S$ generated by
square free monomials $x_{i_1}\ldots x_{i_s}$ where
$\{v_{i_1},\ldots,v_{i_s}\}$ is a non-face of $\Delta$.

Let $I=(M_1,\dots,M_q)$ be a square free monomial ideal in the
polynomial ring $S=k[x_1,\ldots,x_n]$, where $\{M_1,\dots,M_q\}$ is
a minimal generating set of $I$. We define a simplicial complex
$\delta_{\mathcal{F}}(I)$ over a set of vertices
$v_{1},\ldots,v_{n}$ with facets $F_1,\dots,F_q$, where for each
$i$, $F_i=\{v_j\ | \ x_j|M_i, 1\leq j\leq n\}$.
$\delta_{\mathcal{F}}(I)$ is said to be the facet complex of $I$. We
define a simplicial complex $\delta_{\mathcal{N}}(I)$ over a set of
vertices $v_{1},\ldots,v_{n}$, where $\{v_{i_1},\ldots,v_{i_s}\}$ a
face of $\delta_{\mathcal{N}}(I)$ if and only if the product
$x_{i_1}\ldots x_{i_s}$ does not belong to $I$. We call
$\delta_{\mathcal{N}}(I)$ the non-face complex or the
Stanley-Reisner complex of $I$.

To proceed further, we define the Gallai-graph $\Gamma(G)$, which is
a nice combinatorial buildup, see \cite{TG} and \cite{VBL}.

\begin{Definition} {\rm Let $G$ be a graph and $\Gamma(G)$ is said to be the Gallai graph of $G$ if
the following conditions hold;\\
1. Each edge of $G$ represents a vertex of $\Gamma(G)$.\\
2. If two edges are adjacent in $G$ that do not span a triangle in
$G$ then their corresponding vertices will be adjacent in
$\Gamma(G)$.}
\end{Definition}
\begin{Example} {\rm The graph $G$ and its
Gallai graph $\Gamma(G)$ are given in figures (i) and (ii),
repectively}.
\end{Example}

        \centerline{\includegraphics[width=14.0cm]{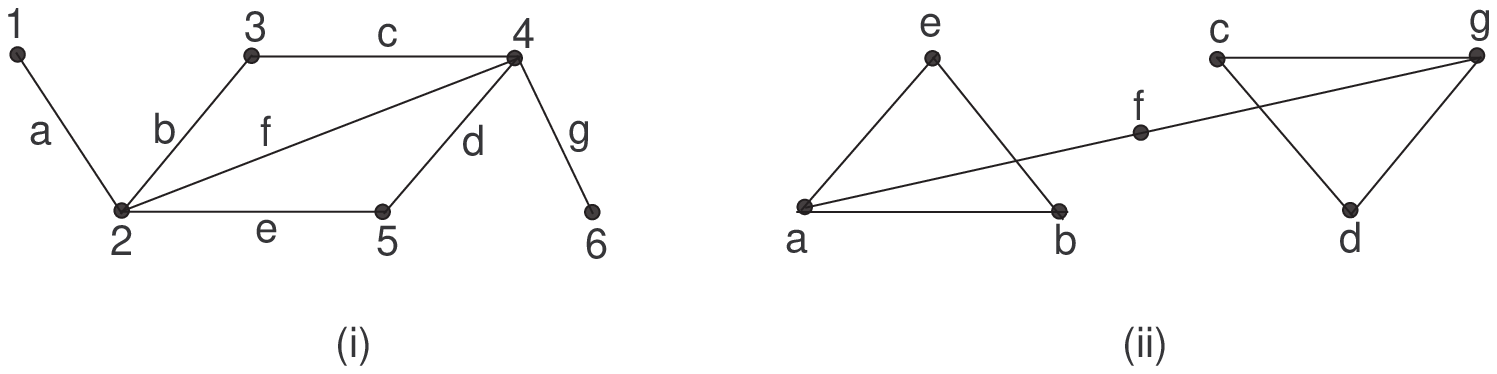}}
        \label{fig Ex5}

To define Gallai-simplicial complex $\Delta_\Gamma(G)$ of a planar
graph $G$, we introduce first a few notions, see \cite{AKNS}.

\begin{Definition}\cite{AKNS} {\rm Let $G$ be a  finite simple graph with vertex set
$V(G)=[n]$ and edge set $E(G)=\{e_{i,j}=\{i,j\}\,|\, i,j\in V(G)\}$.
We define the set of Gallai-indices $\Omega(G)$ of the graph $G$ as
the collection of subsets of $V(G)$ such that if $e_{i,j}$ and
$e_{j,k}$ are adjacent in $\Gamma(G)$, then $F_{i,j,k}=\{i,j,k\}\in
\Omega(G)$ or if $e_{i,j}$ is an isolated vertex in $\Gamma(G)$ then
$F_{i,j}=\{i,j\}\in \Omega(G)$.}
\end{Definition}

\begin{Definition}\cite{AKNS} {\rm A Gallai-simplicial complex $\Delta_\Gamma(G)$ of $G$ is
a simplicial complex defined over $V(G)$ such that
$$\Delta_\Gamma(G)=<F \ | \ F\in \Omega(G)>,$$ where $\Omega(G)$ is
the set of Gallai-indices of $G$.}
\end{Definition}
\begin{Example}
{\rm For the graph $G$ shown in figure below, its Gallai-simplicial
complex $\Delta_{\Gamma}(G)$ is given by}
$$\Delta_{\Gamma}(G)=<\{2,4\}, \{1,2,3\},\{1,3,4\},\{1,2,5\},\{1,4,5\}>.$$
\end{Example}
\begin{figure}[htbp]
        \centerline{\includegraphics[width=10.0cm]{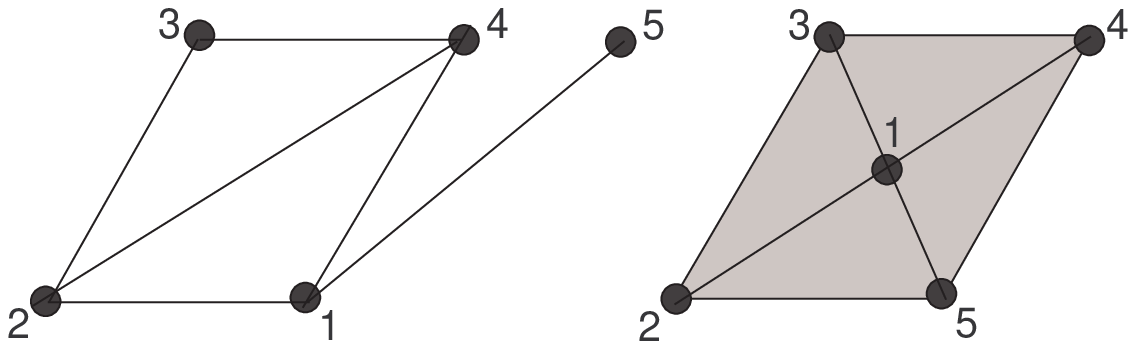}}
$G$ \ \ \ \ \ \ \ \ \ \ \ \ \ \ \ \ \ \ \ \ \ \ \ \ \ \ \ \ \ \ \ \
\ \ \ \ \ \  $\Delta_\Gamma(G)$
\end{figure}


\section{Characterizations of Gallai-Simplicial Complexes}
The ladder graph $L_{n}$ is a planar undirected graph with $2n$
vertices and $3n-2$ edges. The ladder graph $L_{n}$ is the cartesian
product of two path graphs $P_n$ and $P_2$, that is $L_n=P_n\times
P_2$ and looks like a ladder with $n$ rungs. The path graph $P_n$ is
a graph whose vertices can be listed in an order $v_1,\ldots,v_n$
such that $\{v_i,v_{i+1}\}$ is an edge for $1\leq i\leq n-1$. If we
add a cross edge between every two consecutive rungs of the ladder
then the resulting graph is said to be a triangular ladder graph
$L^\ast_n$ with $2n$ vertices and $4n-3$ edges.
\begin{Lemma}\label{l1}
Let $L^\ast_n$ be the triangular ladder graph on $2n$ vertices with
fixing the label of the edge-set $E(L^\ast_n)$ as follows;
$$E(L^\ast_n)=\{e_{1,2}, e_{2,3},\ldots,e_{2n-1,2n},e_{1,2n-1},e_{1,2n},\ldots,e_{n-1,n+1},e_{n-1,n+2}\}.$$
Then, we have\\
$\Omega(L^\ast_n)=\{F_{1,2,3},\ldots,F_{n-2,n-1,n},F_{n,n+1,n+2},\ldots,F_{2n-2,2n-1,2n},\\
F_{1,2,2n},F_{2,3,2n-1},\ldots,F_{n-1,n,n+2},F_{1,2,2n-2},\ldots,F_{n-2,n-1,n+1},\\
F_{1,2n-2,2n-1},\ldots,F_{n-2,n+1,n+2},F_{2,2n-1,2n},\ldots,F_{n-1,n+2,n+3}\}.$
\end{Lemma}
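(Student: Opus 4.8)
The plan is to read off $\Omega(L^\ast_n)$ directly from the definition of the Gallai graph by a vertex-by-vertex inspection, after first pinning down exactly which triples of vertices span a triangle. Throughout I take $n\geq 3$, so that every listed family is nonempty and no edge of $L^\ast_n$ becomes isolated in $\Gamma(L^\ast_n)$; the degenerate case $n=2$ behaves differently, since there the cross edge $e_{1,3}$ is isolated in $\Gamma$.

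First I would record the combinatorial description dictated by the chosen labelling: the edge set splits into the \emph{path edges} $e_{i,i+1}$ for $1\leq i\leq 2n-1$ and the \emph{cross edges} $e_{i,2n-i}$ and $e_{i,2n-i+1}$ for $1\leq i\leq n-1$. The key structural observation is that every cross edge joins a vertex $i\leq n-1$ to a vertex $\geq n+1$; hence the cross edges form a bipartite graph between $\{1,\dots,n-1\}$ and $\{n+1,\dots,2n\}$ and can never, by themselves, close a triangle. Consequently any triangle of $L^\ast_n$ uses at least one path edge, and a short case analysis on whether a triangle has one or two path edges shows that the triangles are precisely the sets $\{a,a+1,2n-a\}$ with $1\leq a\leq 2n-1$ and $a\neq n$; for $a\leq n-1$ these are the ``small-vertex'' triangles and for $a\geq n+1$ the ``large-vertex'' triangles, giving $2n-2$ triangles in all. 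This is the fact that controls which incident edge-pairs span a triangle and therefore get deleted in passing to $\Gamma(L^\ast_n)$.

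Next I would fix a vertex $j$ and list its incident edges, splitting $j$ into the cases $j=1$, $2\leq j\leq n-1$, $j=n$, $j=n+1$, $n+2\leq j\leq 2n-1$, and $j=2n$. For an interior small vertex $2\leq j\leq n-1$ the four incident edges are $e_{j-1,j},e_{j,j+1},e_{j,2n-j},e_{j,2n-j+1}$; using the triangle list, exactly three of the six incident pairs span a triangle, and the remaining three non-triangle pairs yield $F_{j-1,j,j+1}$, $F_{j-1,j,2n-j}$ and $F_{j,j+1,2n-j+1}$, which are exactly the members of Families $1$, $4$ and $3$ centred at $j$. The analogous computation at a large vertex $n+2\leq j\leq 2n-1$ produces the members of Families $2$, $5$ and $6$. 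The boundary vertices are handled individually: $j=1$ contributes only $F_{1,2,2n}$ (Family $3$ with $i=1$), $j=n+1$ contributes only $F_{n,n+1,n+2}$ (Family $2$ with $i=n$), while $j=n$ and $j=2n$ contribute nothing because both of their incident pairs close the triangles $\{n-1,n,n+1\}$ and $\{1,2n-1,2n\}$ respectively. Collecting these contributions over all $j$ reproduces the six families verbatim, and a final bookkeeping check confirms that every edge occurs in at least one recorded triple, so $\Gamma(L^\ast_n)$ has no isolated vertex and $\Omega(L^\ast_n)$ contains no two-element set.

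I expect the main obstacle to be purely organizational rather than conceptual: the reindexing needed to match each non-triangle pair to the correct family and to verify that the index ranges line up exactly, in particular that Families $1$ and $2$ together account for all consecutive path-edge pairs except the one centred at $n$, and that Families $3$--$6$ exhaust the remaining path-cross pairs. Care is also needed at the seam near the central vertices $n$ and $n+1$, where the cross-edge pattern degenerates and where the single ``missing'' triangle $\{n-1,n,n+1\}$ explains the gap between the two path-edge families.
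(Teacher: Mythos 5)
Your proposal is correct, and it is organized differently from the paper's proof. The paper argues family-by-family through the claimed list: it checks that the consecutive triples $F_{i,i+1,i+2}$ lie in $\Omega(L^\ast_n)$ with the two exceptions $F_{n-1,n,n+1}$ and $F_{2n-1,2n,1}$ (which span triangles), then asserts membership of the cross-edge families $F_{i,i+1,j}$ and $F_{i,j,j+1}$ for the stated index ranges, and ends with ``hence the result.'' In other words, the paper verifies only the inclusion of the listed sets into $\Omega(L^\ast_n)$; it never states the full triangle classification, never argues that no further triples occur, and never rules out isolated vertices of $\Gamma(L^\ast_n)$ (which by definition would contribute two-element sets to $\Omega(L^\ast_n)$). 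Your route supplies exactly these missing pieces: the explicit determination of the triangles as the sets $\{a,a+1,2n-a\}$ with $1\leq a\leq 2n-1$, $a\neq n$ (resting on the observation that the cross edges form a bipartite graph between $\{1,\dots,n-1\}$ and $\{n+1,\dots,2n\}$), followed by the sweep over all possible centre vertices $j$ of adjacent edge pairs, proves both inclusions simultaneously, and your final bookkeeping check disposes of the isolated-vertex case. The cost is more case analysis (small interior vertices, large interior vertices, and the four seam vertices $1$, $n$, $n+1$, $2n$); what it buys is that completeness of the list --- the part the paper leaves entirely implicit --- falls out of the enumeration rather than being taken on faith. Your attributions of the non-triangle pairs at each centre to the six families, and the index ranges they sweep out, all check out against the statement.
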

\begin{proof}
By definition, it is clear that $F_{i,i+1,i+2}\in \Omega(L^\ast_n)$
because $i$, $i+1$, $i+2$ are consecutive vertices of $2n$-cycle and
edges $e_{i,i+1}$ and $e_{i+1,i+2}$ do not span a triangle except
$F_{n-1,n,n+1}$ and $F_{2n-1,2n,1}$ as the edge sets
$\{e_{n-1,n},e_{n,n+1}\}$ and $\{e_{2n-1,2n},e_{2n,1}\}$ span
triangles in the triangular ladder graph $L^\ast_n$. Moreover,
$F_{i,i+1,j}\in \Omega(L^\ast_n)$ for indices of types $1\leq i\leq
n-1$; $j=2n+1-i$ and $1\leq i\leq n-2$; $j=2n-1-i$. Also,
$F_{i,j,j+1}\in \Omega(L^\ast_n)$ for indices of types $1\leq i\leq
n-2$; $j=2n-1-i$ and $2\leq i\leq n-1$; $j=2n+1-i$. Hence the
result.
\end{proof}
\begin{figure}[htbp]
        \centerline{\includegraphics[width=12.0cm]{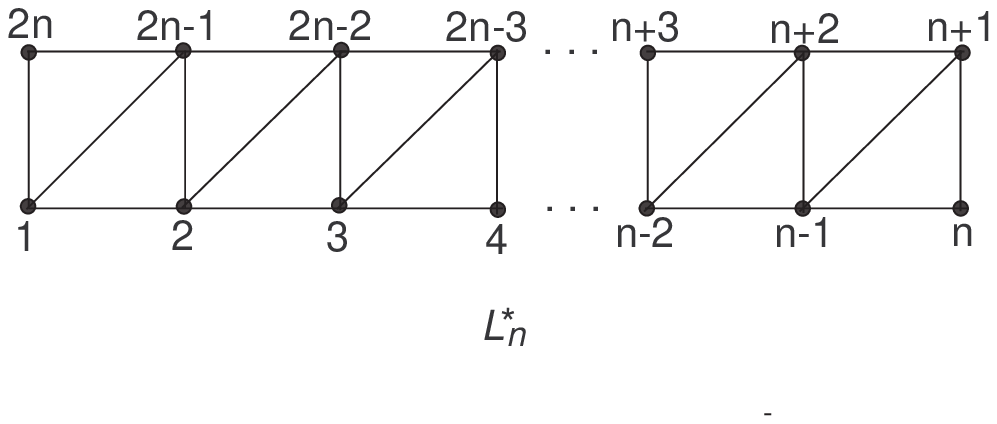}}
        \label{fig Ex5}
\end{figure}
\begin{Theorem}\label{t1}
Let $\Delta_\Gamma(L^\ast_n)$ be the Gallai simplicial complex of
triangular ladder graph $L^\ast_n$ with $2n$ vertices for $n\geq 3$.
Then, the Euler characteristic of $\Delta_\Gamma(L^\ast_n)$ is
$$\chi(\Delta_\Gamma(L^\ast_n))=\sum\limits_{k=0}^N(-1)^k{f_k}=0.$$
\end{Theorem}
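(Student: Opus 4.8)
The plan is to apply the face-counting formula $\chi(\Delta)=\sum_{k=0}^{N}(-1)^k f_k$ derived in Section~2 directly to $\Delta_\Gamma(L^\ast_n)$. The first observation is that every element of $\Omega(L^\ast_n)$ listed in Lemma~\ref{l1} is a three-element set $F_{i,j,k}$; hence $\Delta_\Gamma(L^\ast_n)$ is generated by triangles, is pure of dimension $N=2$, and $f_k=0$ for all $k\geq 3$. Consequently the alternating sum collapses to
$$\chi(\Delta_\Gamma(L^\ast_n))=f_0-f_1+f_2,$$
and the whole theorem reduces to determining these three face numbers as functions of $n$ and checking that they cancel.

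Next I would compute $f_2$ and $f_0$, the easy ones. For $f_2$ I simply count the facets exhibited in Lemma~\ref{l1}, grouping them into their six families: the two strings of consecutive triples $F_{i,i+1,i+2}$ (contributing $(n-2)+(n-1)$ facets), the two families of type $F_{i,i+1,j}$ (contributing $(n-1)+(n-2)$), and the two families of type $F_{i,j,j+1}$ (contributing $(n-2)+(n-2)$). I would also check that these triangles are pairwise distinct, e.g.\ by comparing their index patterns, so that no facet is double-counted; summing then gives $f_2=6n-10$. For $f_0$ I note that each vertex of $[2n]$ occurs in at least one facet, so $f_0=2n$.

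The main work, and the main obstacle, is the count of $f_1$, the number of distinct $1$-faces. Each of the $6n-10$ triangles carries three edges, but the triangular-ladder structure forces heavy sharing: consecutive triples overlap in an edge, and the diagonal families $F_{i,i+1,j}$ and $F_{i,j,j+1}$ reuse the path edges $\{i,i+1\}$ and the rung edges $\{j,j+1\}$ with their neighbours. I would therefore enumerate the $1$-faces by sorting them into the ``path'' edges $\{i,i+1\}$ and the ``diagonal'' edges produced by the remaining pairs, and then track, family by family, exactly which edges are reused, using inclusion--exclusion (or equivalently a direct listing) to avoid overcounting. Carrying this bookkeeping through should yield $f_1=8n-10$; a base case such as $n=3$, where one finds $f_0=6$, $f_1=14$, $f_2=8$, is a convenient sanity check on the pattern.

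Finally I would assemble the three counts:
$$\chi(\Delta_\Gamma(L^\ast_n))=f_0-f_1+f_2=2n-(8n-10)+(6n-10)=0,$$
establishing the claim for all $n\geq 3$. I expect the only genuinely delicate point to be the edge-sharing analysis in the $f_1$ computation; once the distinct-edge count is pinned down, the conclusion is immediate.
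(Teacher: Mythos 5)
Your framework is exactly the paper's: both reduce the theorem to the face-vector identity $\chi=f_0-f_1+f_2$ for the pure $2$-dimensional complex generated by the triangles of Lemma \ref{l1}, and your counts $f_0=2n$ and $f_2=6n-10$ (summing the six facet families, $(n-2)+(n-1)+(n-1)+(n-2)+(n-2)+(n-2)$) agree with the paper's. The gap is that the one step you yourself flag as ``the main work'' --- establishing $f_1=8n-10$ for all $n$ --- is never carried out: you describe a bookkeeping strategy and then state that it ``should yield'' the answer, verifying only the single case $n=3$. Since the entire content of the theorem is the determination of the face numbers, an asserted edge count plus one base case is not a proof; the final sum is exactly $0$ with no slack, so even a constant error in $f_1$ coming from an overlooked boundary coincidence would destroy the conclusion. (Your promised distinctness check for the $6n-10$ facets is likewise left conditional, though that part is routine.)

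The paper closes precisely this gap by a direct enumeration you could adopt verbatim: group the edges $\{j,k\}$, $j<k$, of $\Delta_\Gamma(L^\ast_n)$ by their least endpoint $j$ and list the admissible partners $k$ explicitly. One gets $k\in\{2,3,2n-2,2n-1,2n\}$ for $j=1$ (five edges); $k\in\{j+1,j+2,2n-1-j,2n-j,2n+1-j,2n+2-j\}$ for $2\leq j\leq n-2$ (six edges each, $6(n-3)$ in total); $k\in\{n,n+1,n+2,n+3\}$ for $j=n-1$ (four edges); $k\in\{j+1,j+2\}$ for $n\leq j\leq 2n-2$ ($2(n-1)$ edges); and the single edge $\{2n-1,2n\}$. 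Summing,
$$f_1=5+6(n-3)+4+2(n-1)+1=8n-10,$$
after which $\chi=2n-(8n-10)+(6n-10)=0$ exactly as you claimed. The paper organizes its $f_2$ count the same way (by least vertex, giving $4+5(n-3)+2+(n-1)=6n-10$), so your family-by-family facet count is an equivalent repackaging; it is only the edge enumeration that your proposal leaves undone.
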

\begin{proof}
Since, the triangular ladder graph has $2n$ vertices therefore, we
have $f_0=2n$.\\
Moreover, for $\{l,j,k\}\in \Delta_\Gamma(L^\ast_n)$ with $1\leq
l\leq 2n-2$ and $j,k\in [2n]$, we have
\begin{enumerate}
\item $|\{1,j,k\}|=4$ with $\{j,k\}\in
\{\{2,3\},\{2,2n-2\},\{2,2n\},\{2n-2,2n-1\}\}$;
\item $|\{l,j,k\}|=5(n-3)$ for $2\leq l\leq n-2$ and
$\{j,k\}\in
\{\{l+1,l+2\},\{l+1,2n-1-l\},\{l+1,2n+1-l\},\{2n-1-l,2n-l\},\{2n+1-l,2n+2-l\}\}$;
\item $|\{n-1,j,k\}|=2$ with $\{j,k\}\in \{\{n,n+2\},\{n+2,n+3\}\}$;
\item $|\{l,l+1,l+2\}|=n-1$ for $n\leq l\leq 2n-2$.\\
Adding the results from $(1)$ to $(4)$, we get
$$|\{l,j,k\}|=4+5(n-3)+2+(n-1)=6n-10$$
with $1\leq l\leq 2n-2$ and $j,k\in [2n]$. Therefore, $f_2=6n-10$.\\
Now, for $\{j,k\}\in \Delta_\Gamma(L^\ast_n)$ with $1\leq j\leq
2n-1$ and $k\in [2n]$, we have
  \item $|\{1,k\}|=5$, where $k\in \{2,3,2n-2,2n-1,2n\}$;
  \item $|\{j,k\}|=6(n-3)$ with $2\leq j\leq n-2$ and $k\in
  \{j+1,j+2,2n-1-j,2n-j,2n+1-j,2n+2-j\}$;
  \item $|\{n-1,k\}|=4$, where $k\in \{n,n+1,n+2,n+3\}$;
  \item $|\{j,k\}|=2(n-1)$ with $n\leq j\leq 2n-2$ and $k\in
  \{j+1,j+2\}$;
  \item$|\{2n-1,2n\}|=1$.\\
Adding the results from $(5)$ to $(9)$, we obtain
$$|\{j,k\}|=5+6(n-3)+4+2(n-1)+1=8n-10,$$
where $1\leq j\leq 2n-1$ and $k\in [2n]$. Therefore, $f_1=8n-10$.
\end{enumerate}
Thus, we compute
$$\chi(\Delta_\Gamma(L^\ast_n))=f_0-f_1+f_2=2n-(8n-10)+(6n-10)=0,$$
which is the desired result.
\end{proof}

The Prism graph $Y_{3,n}$ is a simple graph defined by the cartesian
product $Y_{3,n}=C_3\times P_n$ with $3n$ vertices and $3(2n-1)$
edges. We label the edge-set of $Y_{3,n}$ in the following way;\\
$E(Y_{3,n})=\{e_{1,2},
e_{2,3},e_{3,1},e_{4,5},e_{5,6},e_{6,4},\ldots,e_{3i+1,3i+2},e_{3i+2,3i+3},e_{3i+3,3i+1},\ldots,\\
e_{3n-2,3n-1},e_{3n-1,3n},e_{3n,3n-2},e_{1,4},e_{4,7},\ldots,e_{3n-5,3n-2},e_{2,5},e_{5,8},\ldots,e_{3n-4,3n-1},e_{3,6},\\
e_{6,9},\ldots,e_{3n-3,3n}\}$, where
$e_{3i+1,3i+2},e_{3i+2,3i+3},e_{3i+3,3i+1}$ for $0\leq i\leq n-1$
are the edges of $(i+1)$-$th$ $C_3$-cycle.
\begin{Lemma}\label{l2}
Let $Y_{3,n}$ be a prism graph on the vertex set $[3n]$ and edge set $E(Y_{3,n})$, with labeling of edges
given above. Then, we have\\
$\Omega(Y_{3,n})=\{F_{1,2,4},F_{1,2,5},F_{2,3,5},F_{2,3,6},
F_{4,5,1},F_{4,5,2},F_{4,5,7},F_{4,5,8},F_{5,6,2},
F_{5,6,3},F_{5,6,8},\\F_{5,6,9},\ldots,
F_{3n-5,3n-4,3n-8},F_{3n-5,3n-4,3n-7},F_{3n-5,3n-4,3n-2},
F_{3n-5,3n-4,3n-1},\\F_{3n-4,3n-3,3n-7},F_{3n-4,3n-3,3n-6},F_{3n-4,3n-3,3n-1},F_{3n-4,3n-3,3n},
F_{3n-2,3n-1,3n-5},\\F_{3n-2,3n-1,3n-4},F_{3n-1,3n,3n-4},F_{3n-1,3n,3n-3},
F_{3,1,6},F_{3,1,4},F_{6,4,3}, F_{6,4,1},F_{6,4,9},F_{6,4,7},\\
\ldots,F_{3n-3,3n-5,3n-6},F_{3n-3,3n-5,3n-8},F_{3n-3,3n-5,3n},
F_{3n-3,3n-5,3n-2}, F_{3n,3n-2,3n-3},\\F_{3n,3n-2,3n-5},
F_{1,4,7},\ldots,F_{3n-8,3n-5,3n-2},F_{2,5,8},
\ldots,F_{3n-7,3n-4,3n-1}, F_{3,6,9},\\
\ldots, F_{3n-6,3n-3,3n}\}$.
\end{Lemma}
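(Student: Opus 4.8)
The plan is to follow the case-analysis template of Lemma \ref{l1}, the decisive preliminary being a complete description of the triangles of $Y_{3,n}$. First I would show that the only triangles in $Y_{3,n}=C_3\times P_n$ are the $n$ triangular faces $\{3i+1,3i+2,3i+3\}$, $0\le i\le n-1$. Any triangle other than a face would have to use a rung edge, i.e.\ contain two vertices $u,w$ of a common column in consecutive copies of $C_3$; but the two endpoints of a rung edge have no common neighbour, so no triangle can contain a rung edge and every triangle lives inside a single copy of $C_3$. Consequently two adjacent edges of $Y_{3,n}$ span a triangle if and only if they are both face-edges of one $C_3$, and by the definition of $\Gamma(G)$ two edges are adjacent in $\Gamma(Y_{3,n})$ exactly when they share a vertex and are not both face-edges of the same $C_3$.

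With this in hand I would enumerate $\Omega(Y_{3,n})$ vertex by vertex. Each vertex $v$ is incident to its two face-edges together with two rung edges, except when $v$ lies in an end face $\{1,2,3\}$ or $\{3n-2,3n-1,3n\}$, where it carries only one rung edge. A set $F_{i,j,k}=\{i,j,k\}$ belongs to $\Omega(Y_{3,n})$ precisely when the two edges $e_{i,j},e_{j,k}$ meeting at a common vertex do not span a triangle, so at each $v$ the only incident pair to be discarded is the pair of its two face-edges. Hence an interior vertex (degree $4$) contributes $\binom{4}{2}-1=5$ index sets and an end vertex (degree $3$) contributes $\binom{3}{2}-1=2$; since the third potential edge of each such triple is absent, every triple arises from a unique pair at a unique shared vertex, so these counts carry no repetition and give $15(n-2)+12=15n-18$ triples in all.

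It then remains to sort these triples into the displayed families by the type of the defining pair. A face-edge paired with an incident rung edge gives the triples grouped by face-edge: for $e_{3i+1,3i+2}$ the rung edges at $3i+1$ and at $3i+2$ yield $F_{3i+1,3i+2,3i-2},F_{3i+1,3i+2,3i+4},F_{3i+1,3i+2,3i-1},F_{3i+1,3i+2,3i+5}$, and symmetrically for $e_{3i+2,3i+3}$ and $e_{3i+3,3i+1}$, the end faces $i=0$ and $i=n-1$ retaining only the two triples whose single rung edge exists. Two rung edges meeting at an interior column-vertex (necessarily in the same column) give the tail families $F_{3i-2,3i+1,3i+4}$, $F_{3i-1,3i+2,3i+5}$, $F_{3i,3i+3,3i+6}$ for $1\le i\le n-2$. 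Finally no edge is isolated in $\Gamma(Y_{3,n})$: every face-edge is adjacent to a rung edge at an endpoint and every rung edge is adjacent to the two face-edges at either endpoint, so $\Omega(Y_{3,n})$ contains no $2$-element set and the listed triples exhaust it.

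The main obstacle is organizational rather than conceptual. Once the triangle structure is pinned down every step is a finite verification, and the genuine work is reconciling the generic computation at an interior face-edge with the boundary behaviour at the two end faces so that the indices of the displayed list, in particular the shifted endpoint forms such as $3n-8,\dots,3n$, match the vertex-by-vertex tally exactly and no triple is omitted or double-listed.
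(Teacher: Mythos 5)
Your proposal is correct and follows essentially the same route as the paper: both rest on the key observation that the only triangles of $Y_{3,n}$ are the $n$ faces $\{3i+1,3i+2,3i+3\}$, and then enumerate all pairs of adjacent edges not spanning a triangle, sorted into the face-edge/rung-edge and rung-edge/rung-edge families. Your write-up is in fact tighter than the paper's: you actually prove the triangle classification (via the no-common-neighbour argument for rung endpoints), justify that no triple is double-counted, and verify that $\Gamma(Y_{3,n})$ has no isolated vertices so that $\Omega(Y_{3,n})$ contains no $2$-element sets---three points the paper's proof asserts or leaves implicit.
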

\begin{proof}
By definition, one can easily see that $F_{3i+1,3i+2,3i+3}$ does not
belong to $\Omega(Y_{3,n})$ because $3i+1,3i+2,3i+3$ with $0\leq
i\leq n-1$ are vertices of $(i+1)$-$th$ $C_3$-cycle. Therefore, from
construction of all possible triangles in
prism graph $Y_{3,n}$, we have\\
$(i)$ $F_{j,j+1,j-3},F_{j,j+1,j-2}\in \Omega(Y_{3,n})$ for $4\leq
j\leq
  3n-1$ but $j$ is not multiple of 3;\\
$(ii)$ $F_{j,j+1,j+3},F_{j,j+1,j+4}\in \Omega(Y_{3,n})$ for $1\leq
j\leq
  3n-4$ but $j$ is not multiple of 3;\\
$(iii)$ $F_{3j,3j-2,3j-3},F_{3j,3j-2,3j-5}\in \Omega(Y_{3,n})$ for
$2\leq
j\leq n$;\\
$(iv)$ $F_{3j,3j-2,3j+3},F_{3j,3j-2,3j+1}\in \Omega(Y_{3,n})$ for
$1\leq
j\leq n-1$;\\
$(v)$ $F_{j,j+3,j+6}\in \Omega(Y_{3,n})$ for $1\leq j\leq
  3n-6$.\\
Hence the proof.
\end{proof}
\begin{figure}[htbp]
        \centerline{\includegraphics[width=12.0cm]{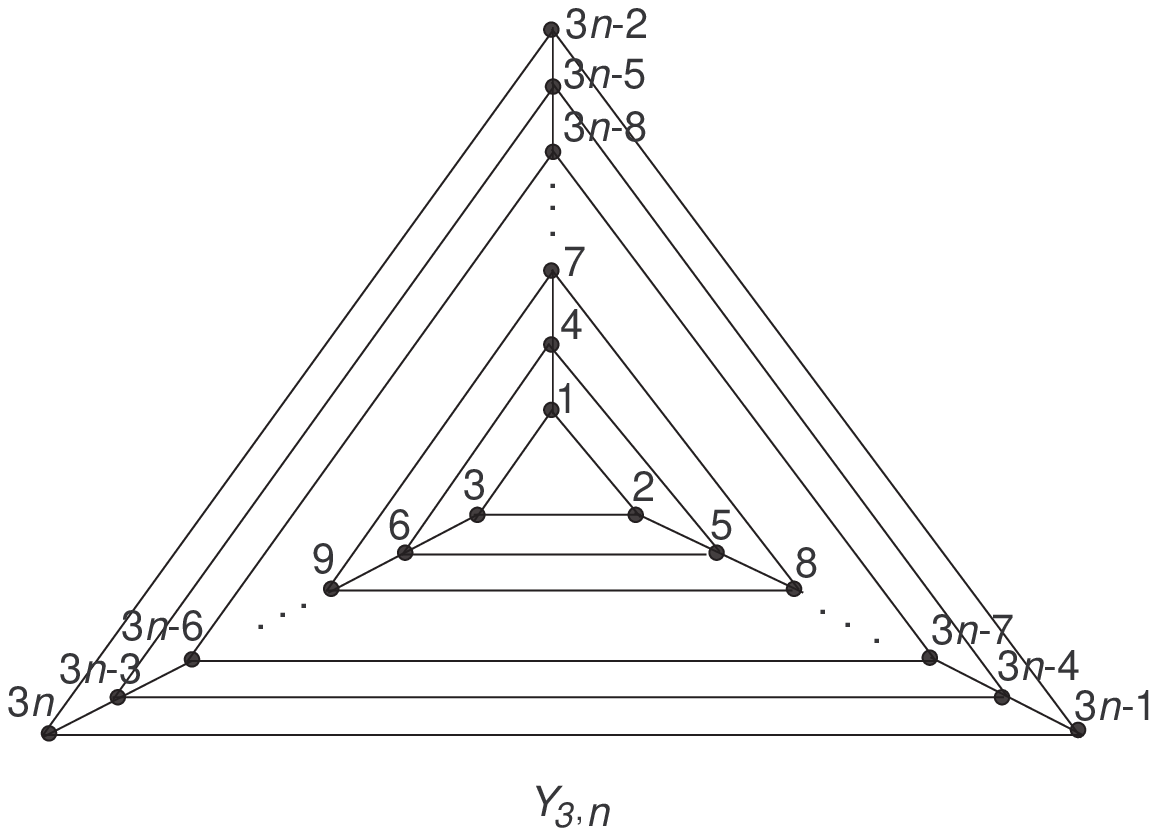}}
        \label{fig Ex5}
\end{figure}

\begin{Theorem}\label{t3}
Let $\Delta_\Gamma(Y_{3,n})$ be the Gallai-simplicial complex of
prism graph $Y_{3,n}$ with $3n$ vertices for $n\geq 3$. Then, the
Euler characteristic of $\Delta_\Gamma(Y_{3,n})$ is
$$\chi(\Delta_\Gamma(Y_{3,n}))=\sum\limits_{k=0}^N(-1)^k{f_k}=3(n-1).$$
\end{Theorem}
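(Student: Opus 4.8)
The plan is to apply the combinatorial Euler-characteristic formula derived in the Preliminaries. Since every Gallai-index listed in Lemma \ref{l2} is a triple $F_{a,b,c}$, the complex $\Delta_\Gamma(Y_{3,n})$ is $2$-dimensional and
$$\chi(\Delta_\Gamma(Y_{3,n})) = f_0 - f_1 + f_2.$$
The vertex count is immediate from the construction of $Y_{3,n}$, namely $f_0 = 3n$. It then remains to count the $2$-faces and the $1$-faces.

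For $f_2$ I would simply tally the five families of triangles in Lemma \ref{l2}, after checking that they are pairwise disjoint. Families (i) and (ii) attach the two non-wraparound edges $\{j,j+1\}$ of each $C_3$-block to a vertex of a neighbouring block; the index constraints $4\le j\le 3n-1$ (resp.\ $1\le j\le 3n-4$) with $j\not\equiv 0\pmod{3}$ admit $2n-2$ such edges in each case, giving $4n-4$ facets apiece. Families (iii) and (iv) attach the wraparound edge $\{3j,3j-2\}$ of each block to its neighbours, contributing $2n-2$ facets each, while family (v) supplies the $3n-6$ purely vertical triangles $\{j,j+3,j+6\}$. Disjointness is clear because each triangle in (i)--(iv) contains exactly one edge lying inside a single block, these block-edges are all distinct, and the triangles in (v) contain no block-edge at all. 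Hence
$$f_2 = (4n-4)+(4n-4)+(2n-2)+(2n-2)+(3n-6) = 15n-18.$$

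The delicate part is $f_1$, since a typical edge lies in several of the above triangles and a naive sum over facets would overcount. Rather than summing incidences I would classify the distinct edges by the block-distance of their endpoints. There are $3n$ edges internal to the blocks (three per $C_3$). Between two consecutive blocks every one of the $3\times 3 = 9$ possible cross-edges actually occurs as a face---the three vertical edges together with six diagonals, the latter arising jointly from the type (i)/(ii) and type (iii)/(iv) triangles---so consecutive blocks contribute $9(n-1)$ edges. Finally family (v) introduces the same-position edges $\{j,j+6\}$ between blocks two apart, of which there are exactly $3n-6$, and these are new and mutually distinct. Therefore
$$f_1 = 3n + 9(n-1) + (3n-6) = 15n-15,$$
and substitution gives $\chi = 3n-(15n-15)+(15n-18)=3(n-1)$, as required.

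The main obstacle is precisely the bookkeeping behind the $f_1$ count. One must confirm that all nine cross-edges genuinely appear between every consecutive pair of blocks, so that the diagonals shared between the forward-pointing (ii)/(iv) and backward-pointing (i)/(iii) triangles are each recorded once, and that the type (v) edges collapse with nothing already counted. The boundary blocks---the first and last $C_3$, where one neighbour is absent and the vertex degrees drop---are where an off-by-one slip is most likely, so I would cross-check the formulas against the explicit case $n=3$ (where they give $f_0=9$, $f_1=30$, $f_2=27$, hence $\chi=6$) before trusting the general count.
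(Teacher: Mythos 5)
Your proposal is correct, and it follows the same overall strategy as the paper: compute the $f$-vector $(f_0,f_1,f_2)=(3n,\,15n-15,\,15n-18)$ and conclude via $\chi=f_0-f_1+f_2=3(n-1)$. The difference lies in how the counting is organized. The paper enumerates both triangles and edges anchored at their smallest vertex, split into residue classes modulo $3$, with the last two blocks treated as separate boundary cases (the terms $6+4+2$ in the $f_2$ count and $5+4+3+2+1$ in the $f_1$ count); you instead read $f_2$ directly off the five families of Lemma \ref{l2} and count $f_1$ by block-distance of the endpoints, the key observation being that all nine cross-edges between any two consecutive $C_3$-blocks occur as faces while the only block-distance-two edges are the $3n-6$ pairs $\{j,j+6\}$ coming from family (v). Your $f_1$ decomposition makes the over-counting danger explicit and is arguably less error-prone than the paper's long case list, whose correctness one must take on faith; conversely, the paper's vertex-anchored enumeration is uniform across $f_1$ and $f_2$ and needs no separate disjointness discussion. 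One imprecision in your write-up: the disjointness justification for $f_2$, ``these block-edges are all distinct,'' is not literally true, since for instance the four triangles of families (i) and (ii) attached to a fixed block edge $\{j,j+1\}$ all share that edge. Distinctness actually follows because, for a fixed block edge, the possible third vertices ($j-3$, $j-2$, $j+3$, $j+4$ for families (i)/(ii), and $3j-5$, $3j-3$, $3j+1$, $3j+3$ for (iii)/(iv)) are pairwise distinct, while family (v) triangles contain no block edge at all; this is a one-line repair that does not affect the count.
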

\begin{proof}
Since, the prism graph has $3n$ vertices therefore, we have
$f_0=3n$.\\
Now, for $\{3l+i,j,k\}\in \Delta_\Gamma(Y_{3,n})$ with $0\leq l\leq
n-2$ and $j,k\in[3n]$ such that $i=1,2,3$, we have
\begin{enumerate}
  \item $|\{3l+1,j,k\}|=7(n-2)$ with $0\leq l\leq n-3$ and
  $\{j,k\}\in
  \{\{3l+2,3l+4\},\{3l+2,3l+5\},\{3l+3,3l+4\},\{3l+3,3l+6\},\{3l+4,3l+5\},\{3l+4,3l+6\},\{3l+4,3l+7\}\}$;
  \item $|\{3l+2,j,k\}|=5(n-2)$ for $0\leq l\leq n-3$ and
  $\{j,k\}\in
  \{\{3l+3,3l+5\},\{3l+3,3l+6\},\{3l+4,3l+5\},\{3l+5,3l+6\},\{3l+5,3l+8\}\}$;
  \item $|\{3l+3,j,k\}|=3(n-2)$ for $0\leq l\leq n-3$ and $\{j,k\}\in
  \{\{3l+4,3l+6\},\{3l+5,3l+6\},\{3l+6,3l+9\}\}$;
  \item $|\{3n-5,j,k\}|=6$, where $\{j,k\}\in
  \{\{3n-4,3n-2\},\{3n-4,3n-1\},\{3n-3,3n-2\},\{3n-3,3n\},\{3n-2,3n-1\},\{3n-2,3n\}\}$;
  \item $|\{3n-4,j,k\}|=4$, where $\{j,k\}\in
  \{\{3n-3,3n-1\},\{3n-3,3n\},\{3n-2,3n-1\},\{3n-1,3n\}\}$;
  \item $|\{3n-3,j,k\}|=2$, where $\{j,k\}\in
  \{\{3n-2,3n\},\{3n-1,3n\}\}$.\\
Adding the results from $(1)$ to $(6)$, we get
$$f_2=7(n-2)+5(n-2)+3(n-2)+6+4+2=15n-18.$$
Next, for $\{3j+i,k\}\in \Delta_\Gamma(Y_{3,n})$ with $0\leq j\leq
n-2$ and $k\in[3n]$ such that $i=1,2,3$, we obtain
  \item $|\{3j+1,k\}|=6(n-2)$ with $0\leq j\leq n-3$ and
  $k\in \{3j+2,3j+3,3j+4,3j+5,3j+6,3j+7\}$;
  \item $|\{3j+2,k\}|=5(n-2)$ with $0\leq j\leq n-3$ and $k\in
  \{3j+3,3j+4,3j+5,3j+6,3j+8\}$;
  \item $|\{3j+3,k\}|=4(n-2)$ with $0\leq j\leq n-3$ and $k\in
  \{3j+4,3j+5,3j+6,3j+9\}$;
  \item $|\{3n-5,k\}|=5$, where $k\in \{3n-4,3n-3,3n-2,3n-1,3n\}$;
  \item $|\{3n-4,k\}|=4$, where $k\in \{3n-3,3n-2,3n-1,3n\}$;
  \item $|\{3n-3,k\}|=3$, where $k\in \{3n-2,3n-1,3n\}$.\\
Moreover, we have
  \item $|\{3n-2,k\}|=2$, where $k\in \{3n-1,3n\}$;
  \item $|\{3n-1,3n\}|=1.$\\
Adding the results from $(7)$ to $(14)$, we get
$$f_1=6(n-2)+5(n-2)+4(n-2)+5+4+3+2+1=15n-15.$$
\end{enumerate}
Hence, we compute
$$\chi(\Delta_\Gamma(Y_{3,n}))=f_0-f_1+f_2=3n-(15n-15)+(15n-18)=3(n-1),$$
which is the desired result.
\end{proof}

\section{Construction of $f$-Gallai graphs}

We introduce first the $f$-Gallai graph.

\begin{Definition}
{\rm A finite simple graph $G$ is said to be $f$-Gallai graph, if
the edge ideal $I(\Gamma(G))$ of the Gallai graph $\Gamma(G)$ is an
$f$-ideal}.
\end{Definition}
The following theorem provided us a construction of $f$-graphs.

\begin{Theorem} \cite{MAZ}.\label{th1}
{\rm Let $G$ be a simple graph on $n$ vertices. Then for the
following
constructions, $G$ will be $f$-graph:\\
Case(i) When $n= 4l$. $G$ consists of two components $G_1$ and $G_2$
joined with $l$-edges, where both $G_1$ and $G_2$ are the complete
graphs on $2l$ vertices.\\
Case(ii) When $n= 4l+ 1$. $G$ consists of two components $G_1$ and
$G_2$ joined with $l$-edges, where $G_1$ is the complete graph on
$2l$ vertices and $G_2$ is the complete graph on $2l+ 1$ vertices.}
\end{Theorem}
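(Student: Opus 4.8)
The plan is to reduce the statement to a combinatorial criterion for an edge ideal to be an $f$-ideal, and then to verify that criterion by direct counting in each of the two constructions. First I would unwind the two complexes attached to $I(G)$. Since $I(G)$ is generated by the degree-$2$ monomials $x_ix_j$ with $\{i,j\}\in E(G)$, the facet complex $\delta_{\mathcal F}(I(G))$ is one-dimensional with facets exactly the edges of $G$, so its $f$-vector is $(|\{\text{non-isolated vertices}\}|,\,|E(G)|)$. A set $F$ is a face of $\delta_{\mathcal N}(I(G))$ exactly when $\prod_{i\in F}x_i\notin I(G)$, i.e. when $F$ contains no edge; thus $\delta_{\mathcal N}(I(G))$ is the independence complex of $G$, with $f_0=n$ and $f_1=\binom{n}{2}-|E(G)|$.

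From these descriptions I would extract the working criterion. Equality of $f$-vectors first forces the two complexes to have the same dimension; since $\delta_{\mathcal F}(I(G))$ has dimension $1$, the independence complex must also have dimension $1$, i.e. $G$ has no independent set of three vertices. Matching $f_0$ then says $G$ has no isolated vertex, and matching $f_1$ is the numerical identity $|E(G)|=\tfrac{n(n-1)}{4}$. I would record this as a lemma: $G$ is an $f$-graph if and only if $G$ has no isolated vertex, $G$ has no independent set of size three, and $|E(G)|=n(n-1)/4$; this is the degree-$2$ instance of the classification in \cite{AMBZ}.

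With the criterion in place, each case becomes a short check. In both constructions $G$ is two cliques joined by further edges, so every vertex has degree at least $2l-1\ge1$ and there are no isolated vertices; moreover any independent set meets each clique in at most one vertex, so it has at most two elements, which gives the dimension condition for free. It then remains only to count edges: in Case (i) one finds $|E(G)|=2\binom{2l}{2}+l=l(4l-1)=\tfrac{n(n-1)}{4}$ for $n=4l$, and in Case (ii) one finds $|E(G)|=\binom{2l}{2}+\binom{2l+1}{2}+l=l(4l+1)=\tfrac{n(n-1)}{4}$ for $n=4l+1$. Both identities hold, so the criterion is satisfied and $G$ is an $f$-graph.

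The substantive point is the criterion in the second paragraph, and inside it the observation that equal $f$-vectors force a common dimension, which is exactly what converts the $f$-ideal condition into the independence-number restriction; once this is isolated, the counting is routine bookkeeping whose only subtlety is to count each joining edge once and to use $2l\ge2$ to rule out isolated vertices.
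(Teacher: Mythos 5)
Your proposal is correct, but note that the paper you are being compared against contains no proof of this statement at all: Theorem \ref{th1} is imported verbatim from \cite{MAZ} and used as a black box (it is the engine behind Theorem \ref{t5}), so the only meaningful comparison is with the original proof in the cited literature. Your argument reconstructs exactly the standard machinery there: the facet complex of $I(G)$ is $G$ itself viewed as a one-dimensional complex (with $f_0$ the number of non-isolated vertices), the non-face complex is the independence complex of $G$, and equality of $f$-vectors is therefore equivalent to the three conditions you isolate --- no isolated vertex, no independent set of size three (equivalently, triangle-free complement, which is what forces the common dimension), and $|E(G)|=n(n-1)/4$. This is precisely the degree-$2$ characterization of $f$-ideals from \cite{AAAB} and \cite{AMBZ} on which the proof in \cite{MAZ} rests, so your route is essentially the canonical one rather than a genuinely different alternative. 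Your verification of the criterion for the two constructions is also sound: independent sets meet each clique in at most one vertex, the clique degrees $2l-1\geq 1$ rule out isolated vertices, and the edge counts $2\binom{2l}{2}+l=l(4l-1)$ and $\binom{2l}{2}+\binom{2l+1}{2}+l=l(4l+1)$ match $n(n-1)/4$ in the respective cases. The one point worth making explicit, since your dimension argument silently uses it, is that $\delta_{\mathcal N}(I(G))$ must actually attain dimension $1$; this is automatic here because $|E(G)|=n(n-1)/4<\binom{n}{2}$ guarantees a non-adjacent pair, so the independence number is exactly $2$.
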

\begin{Definition}
{\rm The star graph $S_n$ is a complete bipartite graph $K_{1,n}$ on
$n+1$ vertices and $n$ edges formed by connecting a single vertex
(central vertex) to all other vertices.}
\end{Definition}

We establish now the following result.

\begin{Theorem}\label{t5}
{\rm Let $G$ be a finite simple graph on $n$ vertices of the form $n=3l+2$ or $3l+3$. Then for the following constructions, $G$ will
be $f$-Gallai graph.\\
{\bf Type 1.}  When $n=3l+2$. $G=\mathbb{S}_{4l}$ is a graph
consisting of two copies of star graphs $S_{2l}$ and $S'_{2l}$ with
$l\geq 2$ having $l$ common vertices.\\ {\bf Type 2.} When $n=3l+3$.
$G=\mathbb{S}_{4l+1}$ is a graph consisting of two star graphs
$S_{2l}$ and $S_{2l+1}$ with $l\geq 2$ having $l$ common vertices.}
\end{Theorem}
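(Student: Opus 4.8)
The plan is to reduce the statement to Theorem \ref{th1} by computing the Gallai graph $\Gamma(G)$ explicitly and recognizing it as one of the two $f$-graph constructions listed there. Recall that $G$ is $f$-Gallai exactly when $I(\Gamma(G))$ is an $f$-ideal; so if I can show that $\Gamma(\mathbb{S}_{4l})$ is isomorphic to the Case (i) graph on $4l$ vertices and that $\Gamma(\mathbb{S}_{4l+1})$ is isomorphic to the Case (ii) graph on $4l+1$ vertices, then Theorem \ref{th1} immediately gives that these edge ideals are $f$-ideals, which is precisely the assertion.

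First I would compute the Gallai graph of a single star. Writing the first star $S_{2l}$ with center $c_1$ and labeling its edges $e_1,\ldots,e_{2l}$, I note that $K_{1,2l}$ is triangle-free and all of its edges meet at $c_1$; hence every pair $e_i,e_j$ is adjacent in $G$ without spanning a triangle, so these $2l$ vertices span a complete subgraph $K_{2l}$ of $\Gamma(G)$. The same argument applied to the second star (center $c_2$, with edges $f_1,\ldots,f_{2l}$ in Type 1 and $f_1,\ldots,f_{2l+1}$ in Type 2) produces a second complete subgraph, $K_{2l}$ or $K_{2l+1}$. I would then check that these complete subgraphs persist in the combined graph: a triangle on $\{c_1,v_i,v_j\}$ would demand the edge $\{v_i,v_j\}$, which is never present because leaves are joined only to their centers.

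Next I would count the cross edges, i.e.\ the adjacencies in $\Gamma(G)$ between an edge of the first star and an edge of the second. Taking the $l$ common vertices to be shared leaves $v_1=w_1,\ldots,v_l=w_l$ (with distinct centers $c_1\neq c_2$, the configuration forced by the vertex counts $n=3l+2$, resp.\ $3l+3$), an edge $e_i=\{c_1,v_i\}$ and an edge $f_j=\{c_2,w_j\}$ meet in $G$ only when $v_i=w_j$, i.e.\ when $i=j\leq l$; and since $\{c_1,c_2\}$ is not an edge of $G$, no such pair spans a triangle. Thus $\Gamma(G)$ receives exactly the $l$ cross edges $e_i\sim f_i$ for $1\leq i\leq l$, and these form a matching joining $l$ vertices of the first complete subgraph to $l$ vertices of the second. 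This exhibits $\Gamma(G)$ as the Case (i), resp.\ Case (ii), graph of Theorem \ref{th1}, and the conclusion follows.

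The hard part will be the two triangle checks that together pin down the isomorphism type: one must verify that no two edges of the same star span a triangle in the full graph (so that each star contributes a complete subgraph and nothing is lost), and that none of the $l$ cross adjacencies is destroyed by a triangle through a shared leaf. Both reduce to the single observation that the only edges of $G$ are the star edges and that two distinct leaves are never adjacent; one should also confirm that the $l$ joining edges supplied by Theorem \ref{th1} are arranged as a matching, matching the pattern produced above. Once the isomorphism between $\Gamma(G)$ and the Theorem \ref{th1} graph is established, the $f$-ideal property is inherited verbatim and no further computation is needed.
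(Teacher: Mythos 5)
Your proposal is correct and follows essentially the same route as the paper's own proof: you compute $\Gamma(\mathbb{S}_{4l})$ and $\Gamma(\mathbb{S}_{4l+1})$ explicitly, showing each star's edges induce a complete graph ($K_{2l}$ or $K_{2l+1}$) joined by the $l$ matching edges coming from the shared leaves, and then invoke Theorem \ref{th1}. Your version is in fact slightly more careful than the paper's, since you explicitly verify the triangle-free conditions and the matching structure that the paper merely asserts.
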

\begin{proof}
{\bf Type 1.} When $n=3l+2$, the number of edges in
$\mathbb{S}_{4l}$ will be $4l$, as shown in figure $\mathbb{S}_{12}$
with l=3. Let $\{e_1,\ldots,e_{2l}\}$ and $\{e'_1,\ldots,e'_{2l}\}$
be the edge sets of the star graphs $S_{2l}$ and $S'_{2l}$,
respectively such that $e_i$ and $e'_i$ have a common vertex for
each $i=1,\ldots,l$. While finding Gallai graph
$\Gamma(\mathbb{S}_{4l})$ of the graph $\mathbb{S}_{4l}$, we observe
that the edges $e_1,\ldots,e_{2l}$ of the star graph $S_{2l}$ in
$\mathbb{S}_{4l}$ will induce a complete graph $\Gamma(S_{2l})$ on
$2l$ vertices in the Gallai graph $\Gamma(\mathbb{S}_{4l})$, as
shown in figure $\Gamma(\mathbb{S}_{12})$ with $l=3$. Similarly, the
edges $e'_1,\ldots,e'_{2l}$ of the star graph $S'_{2l}$ will induce
another complete graph $\Gamma(S'_{2l})$ on $2l$ vertices in
$\Gamma(\mathbb{S}_{4l})$. As, $e_i$ and $e'_i$ are the adjacent
edges in $\mathbb{S}_{4l}$ for each $i=1,\ldots,l$. Therefore, $e_i$
and $e'_i$ will be incident vertices in $\Gamma(\mathbb{S}_{4l})$
for every $i=1,\ldots,l$. Thus, Gallai graph
$\Gamma(\mathbb{S}_{4l})$ having $4l$ vertices consists of two
components $\Gamma(S_{2l})$ and $\Gamma(S'_{2l})$ joined with $l$-
edges, where both $\Gamma(S_{2l})$ and $\Gamma(S'_{2l})$ are
complete graphs on $2l$ vertices. Therefore, by Theorem \ref{th1},
$\Gamma(\mathbb{S}_{4l})$ is $f$-Gallai graph.

\begin{figure}[htbp]
        \centerline{\includegraphics[width=14.0cm]{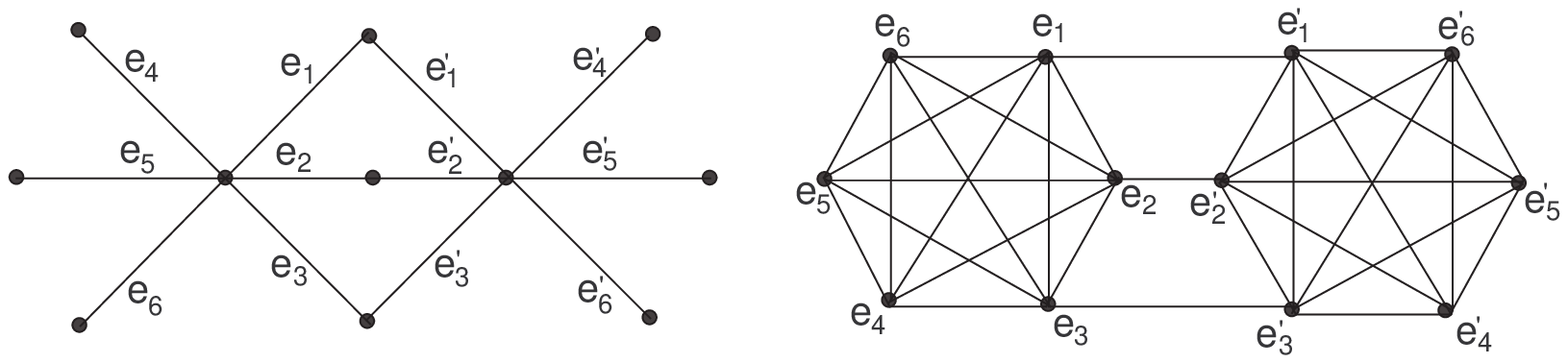}}
        \label{fig Ex5}
        $\mathbb{S}_{12}$ \ \ \ \ \ \ \ \ \ \ \ \ \ \ \ \ \ \ \ \ \ \ \ \ \ \ \ \ \ \ \ \ \ \ \ \ \ \ \ \ \ \ \  $\Gamma(\mathbb{S}_{12})$
\end{figure}
{\bf Type 2.} When $n=3l+3$, the number of edges in
$\mathbb{S}_{4l+1}$ will be $4l+1$, see figure $\mathbb{S}_{13}$
(where $l=3$). Let $\{e_1,\ldots,e_{2l}\}$ and
$\{e'_1,\ldots,e'_{2l+1}\}$ be the edge sets of the star graphs
$S_{2l}$ and $S_{2l+1}$ (respectively) such that $e_i$ and $e'_i$
share a common vertex for each $i=1,\ldots,l$. One can easily see
that the edges $e_1,\ldots,e_{2l}$ of $S_{2l}$ in
$\mathbb{S}_{4l+1}$ will induce a complete graph $\Gamma(S_{2l})$ on
$2l$ vertices in the Gallai graph $\Gamma(\mathbb{S}_{4l+1})$, see
figure $\Gamma(\mathbb{S}_{13})$ (where $l=3$). Similarly, the edges
$e'_1,\ldots,e'_{2l+1}$ of $S_{2l+1}$ will induce another complete
graph $\Gamma(S_{2l+1})$ on $2l+1$ vertices in
$\Gamma(\mathbb{S}_{4l+1})$. Since $e_i$ and $e'_i$ are the adjacent
edges in $\mathbb{S}_{4l+1}$ for every $i=1,\ldots,l$. Therefore,
$e_i$ and $e'_i$ will be incident vertices in the Gallai graph
$\Gamma(\mathbb{S}_{4l+1})$ for each $i=1,\ldots,l$. Thus, the
Gallai graph $\Gamma(\mathbb{S}_{4l+1})$ having $4l+1$ vertices
consists of two components $\Gamma(S_{2l})$ and $\Gamma(S_{2l+1})$
joined with $l$-edges, where $\Gamma(S_{2l})$ and $\Gamma(S_{2l+1})$
are complete graphs on $2l$ and $2l+1$ vertices, respectively.
Hence, by Theorem \ref{th1}, $\Gamma(\mathbb{S}_{4l+1})$ is
$f$-Gallai graph.

\begin{figure}[htbp]
        \centerline{\includegraphics[width=14.0cm]{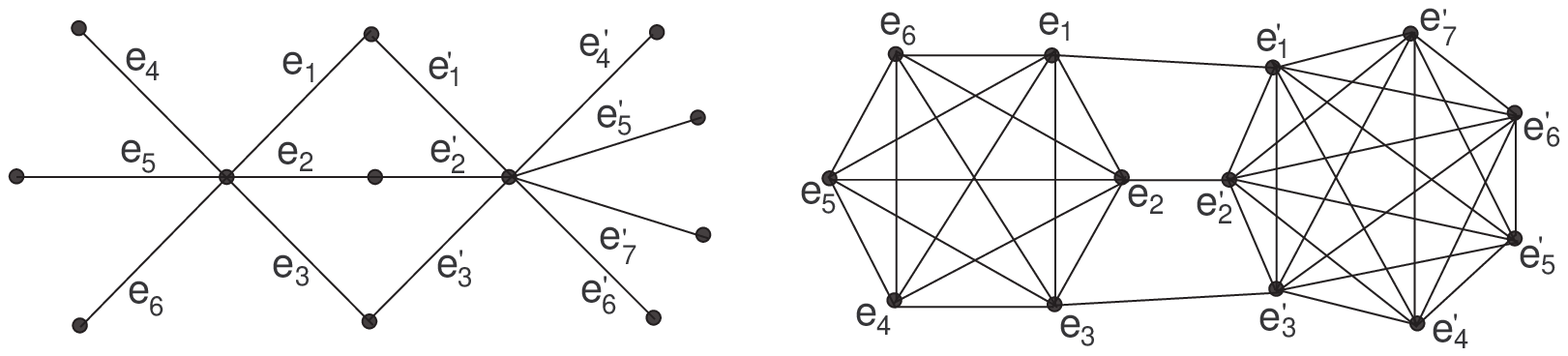}}
        \label{fig Ex5}
        $\mathbb{S}_{13}$ \ \ \ \ \ \ \ \ \ \ \ \ \ \ \ \ \ \ \ \ \ \ \ \ \ \ \ \ \ \ \ \ \ \ \ \ \ \ \ \ \ \ \  $\Gamma(\mathbb{S}_{13})$
\end{figure}
\end{proof}

\begin{Remark} One can easily see that the Gallai graph of the
line graph $L_n$ is isomorphic to $L_{n-1}$ and that of cyclic graph
$C_n$ is isomorphic to $C_n$. Therefore, both $\Gamma(L_n)$ and
$\Gamma(C_n)$ are $f$-Gallai graphs if and only if $n=5$, see
\cite{MAZ}.
\end{Remark}

\end{document}